\newtheorem{theorem}{Theorem}[section]
\newtheorem{proposition}[theorem]{Proposition}
\newtheorem{corollary}[theorem]{Corollary}
\theoremstyle{definition}
\newtheorem{definition}[theorem]{Definition}
\newtheorem{example}[theorem]{Example}
\theoremstyle{remark}
\newtheorem{remark}[theorem]{Remark}
\numberwithin{equation}{section}
\newcommand{\gen}[1]{\langle#1\rangle}
\newcommand{\NN}{\mathbb{N}}
\newcommand{\CC}{\mathbb{C}}
\newcommand{\PP}{\mathbb{P}}
\newcommand{\TT}{\mathbb{T}}
\newcommand{\Tf}{\mathcal{T}}
\newcommand{\Nf}{\mathcal{N}}
\newcommand{\Ho}{\mathcal{H}om}
\newcommand{\OO}{\mathcal{O}}
\DeclareMathOperator{\Proj}{Proj}
\DeclareMathOperator{\II}{II}
\DeclareMathOperator{\smooth}{sm}
\DeclareMathOperator{\Sym}{Sym}
\DeclareMathOperator{\Hilb}{Hilb}
\DeclareMathOperator{\rank}{rank}
\DeclareMathOperator{\Hom}{Hom}
\DeclareMathOperator{\im}{Im}
\begin{document}

\title{On the hypersurfaces contained in their Hessian}


\author{Pietro De Poi}
\address{Dipartimento di Scienze Matematiche, Informatiche e Fisiche\\
Universit\`a degli Studi di Udine\\
via delle Scienze, 206\\
33100 Udine\\
Italy
}
\email{pietro.depoi@uniud.it}

\author{Giovanna Ilardi}
\address{Dipartimento di Matematica e Applicazioni
\\ Universit\`a degli Studi di Napoli ``Federico II''\\
80126 Napoli\\
Italy}
\email{giovanna.ilardi@unina.it}

\thanks{The authors
are members of INdAM-GNSAGA and the first author is supported by PRIN
``Geometria delle variet\`a algebriche'' and by Universit\`a degli Studi
di Udine -- DMIF project ``Geometry PRID ZUCC 2017''}

\subjclass[2010]{Primary 14J70. Secondary  14N05; 53A30}

\keywords{Hessian, $h$-parabolic points, focal points}

\date{\today}


\begin{abstract}
This article presents the theory of focal locus applied to the hypersurfaces in the projective space which 
are (finitely) covered by linear spaces and such that the tangent space is constant along these spaces. 
\end{abstract}

\maketitle

\section*{Introduction}

Let $X\subset \PP^n$ be an irreducible hypersurface given by a form $f$ of degree $d\ge 3$. Let  $\II_P$
 be the second fundamental 
form of $X$ at a general smooth point $P\in X$; it can be interpreted as a quadric $\II_P\subset \PP(T_PX)\cong \PP^{n-2}$. 
A smooth point $P$ is said to be \emph{$h$-parabolic} for $X$ if the quadric  $\II_P$ has rank $n-h-1$,  $1\le h \le n-1$, with 
the convention that the null quadric has rank $0$. 

B. Segre in \cite{Seb} proved that 
if every smooth point of $X$ is $h$-parabolic, then the polynomial $f^h$ divides 
the Hessian polynomial $h(f)$ of $X$, he  posed a  problem of finding the hypersurfaces $X$ with $h$-parabolic points such 
that $h(f)$ is divisible by $f^{h+1}$. 

Then the problem was studied by A. Franchetta in \cite{F2} and \cite{F1} where the $1$-parabolic 
case is considered in the case of, respectively, $\PP^3$ and $\PP^4$ and more  generally  by C. Ciliberto in \cite{Ciruzzo}. 
A. Franchetta used, to prove his results, the theory of focal locus  of families of lines introduced by C. Segre in \cite{CSeg}. This 
theory has not been carried on by other more recent researchers.   
\\ For the study of hypersurfaces with vanishing Hessian (i.e. such that $f^h$ divides the Hessian polynomial $h(f)$ of $X$, for every $h$) and their link with ideals failing the strong Lefschetz property see for example \cite{GoR}, \cite{Go}, \cite{GoZ}, \cite{Wa2}, \cite{Wa} and \cite{Wa1}.

 
In this paper, we follow the approach introduced in \cite{CS} for studying \emph{congruences of $k$-planes } $B$ of $\PP^n$, 
i.e. families of  $k$-planes
of dimension $n-k$ via their focal locus, giving a modern account of C. Segre's theory of \cite{Sf}. 
The number of $k$-planes passing through a  general point $P\in \PP^n$ is, by dimensional reasons, finite, and this number is 
called \emph{order} $a$ of the congruence. If this number is positive, i.e.,  the $k$-planes sweep out $\PP^n$, then the congruence 
is geometrically described by its \emph{focal locus} $F$, that is the branch locus of the map $f$ 
from the incidence correspondence to $\PP^n$: 
indeed, either a $k$-plane of $B$ is contained in $F$ or it intersects $F$ in hypersurface of degree $n-k$, the latter being the general 
case. If in particular $a=1$, i.e. $f$ is birational, by Zariski Main Theorem we have that $F$ coincides with the \emph{fundamental locus}, 
that is, the locus of points through which there passes infinitely many $k$-planes of $B$; for example, in the case $k=1$, $B$ is given 
by the $(n-1)$-secant lines to $F$ (see for example \cite{D1}, \cite{D2}, \cite{D3}, \cite{D4} and \cite{D5}). 

Unfortunately this geometric approach relies on the fact that---at least in the general case---the $k$-planes sweep out $\PP^n$, which 
is a smooth variety, and the fact that the normal space of a $k$-plane in $\PP^n$ is immediate to calculate. 

In this paper, we have adapted and applied the theory of focal locus
to the case of the hypersurfaces such that their smooth points are 
$h$-parabolic. Such a hypersurface $X\subset \PP^n$ can be characterized, see Proposition \ref{prop:gs}, as covered by a 
$\Sigma_X$ family $h$-planes, 
such that through a  general point $P\in \PP^n$ there passes only one $h$-plane of the family, see Remark \ref{rmk:111}. We have that  $\dim 
\Sigma_X=n-1-h$, and, in analogy of the case of $\PP^n$ we can call $\Sigma_X$ a \emph{congruence of $h$-planes of $X$ of order $1$}. 
Indeed, since $X$ is always singular, we have to take desingularizations of the varieties and maps considered. This can be done and 
it is contained in Section \ref{sec:foc}; the main problem here is that there is no  good description of the focal locus as in the 
case of $\PP^n$ and this is mainly due to the fact that  it is not easy to find the normal space of an $h$-plane in a desingularization 
of $X$. Though, something can be said: for example, we can give dimensional bounds on the focal locus $F$ of this family: as in the 
case of $\PP^n$, $F$ has codimension at least $2$ in (a desingularization of) $X$: see Theorem \ref{thm:n-3}. 
We note that, differently to the case of $\PP^n$, it 
can happen that $F=\emptyset$: we characterize these varieties in Proposition \ref{prop:con}. 
Finally, we show, using the language of the moving frames, that our definition of the focal locus is more restrictive than the classical 
one of C. Segre and A. Franchetta. 

\section{Preliminaries}

We shall adopt the usual notation and conventions of algebraic geometry as in \cite{H} and \cite{GH}.
We will work with schemes and varieties over the complex field $\CC$. If $V$ is a $\CC$-vector space
of dimension $n+1$, with fixed basis $(e_0,\dotsc,e_n)$ and with dual basis $(x_0,\dotsc,x_n)$ in the dual vector space
$V^*$, as usual $\PP^n=\PP(V)=\Proj\Sym V^*= \Proj(\CC[x_0,\dotsc,x_n])$ is the projective $n$-dimensional
space of $V$, and $G(2,V)$ is the Grassmannian of $2$-dimensional vector subspaces of $V$ (or, which is the same, of the
lines in $\PP(V)$). In what follows, we shall denote by $T_PX$ the Zariski tangent space and by
$\TT_PX\subset\PP^n$ the embedded
tangent space.

\section{$h$-parabolic points}

Let  us consider an irreducible hypersurface $X\subset \PP^n$ 
given by an irreducible form $f\in \CC[x_0,\dotsc,x_n]_d$ of degree $d\geq 3$.
Let $\II_P$ be the \emph{second fundamental form} of $X$ at a general smooth point $P\in X$.
Such a form can be seen, by definition, as a linear system generated by a quadric in $\PP(T_PX)\cong \PP^{n-2}$.
As such, $\II_P$
either consists of a unique quadric of rank $k$, where  $1\le k \le n-1$, or it is empty, in the case of the zero quadric.
Therefore, we can say that $\II_P$ is defined by a quadric of rank $k$, where $0\le k \le n-1$. By abuse of notation, sometimes we will 
identify this quadric with the second fundamental form  $\II_P$.

Obviously, we expect that the general point of the general hypersurface has $\II_P$ defined by a quadric of rank $n-1$.

Sometimes, it is convenient to consider the quadric defining the second fundamental form $\II_P$ as a quadric $Q(\II_P)$ in $\TT_PX$; 
clearly, this is the
set of lines passing through $P$ which intersect $X$ with multiplicity at least $3$.
\begin{definition}
Classically, $Q(\II_P)\subset \TT_PX$ is called
\emph{asymptotic cone}.  
\end{definition}
\begin{remark}
 The equation defining $Q(\II_P)$ is the same as  one defining $\II_P$, and the vertex of $Q(\II_P)$ is the span of $P$ and the
vertex of  $\II_P$.
\end{remark}

\begin{definition}\label{defHparabPoint}
 We call a smooth point $P$ \emph{$h$-parabolic} for $X$ if the quadric of $\II_P$ has rank $n-h-1$ ($1 \le h \le n-1$).
In other words, the dimension of the vertex of  $\II_P$ is $h-1$ (equivalently, the dimension of the vertex of $Q(\II_P)$ is $h$).
If $\II_P=\emptyset$, i.e. if $P$ is $(n-1)$-parabolic, we say also that $P$ is a \emph{flex}. The  
set of all $h$-parabolic points 
is the \emph{locus of the parabolic points} and its points are the \emph{parabolic points of $X$}. 
\end{definition}
\section{The Hessian}
Now, we recall the following basics.
\begin{definition}
The \emph{Hessian matrix} of $X=V(f)$ is the matrix formed by the second partial derivatives of $f$
\begin{equation*}
H(f):=\left(\frac{\partial^2f}{\partial x_i \partial x_j}\right)_{0\le i,j\le n}.
\end{equation*}
\end{definition}

\begin{definition}
The \emph{Hessian polynomial} of $X$ is the polynomial defined by
\begin{equation*}
h(f):=\det H(f).
\end{equation*}
\end{definition}
Clearly either $h(f)\equiv 0$ or $h(f)$ is a form of degree $(n+1)(d-2)$. In the latter case, the form defines a hypersurface,
which is called \emph{Hessian hypersurface} of $X$.

It is easy to see (see for example \cite[\S 5.6.4]{Bel}) the following 
\begin{proposition}
The locus of parabolic points (together with the singular locus)
is given by the intersection of the Hessian hypersurface with $X$ itself. 

Therefore the closure of the parabolic locus is the 
union of the parabolic locus with the singular locus of $X$.  
\end{proposition}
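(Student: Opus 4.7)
The plan is to prove the set-theoretic equality $X \cap V(h(f)) = \{\text{parabolic points of } X\} \cup \Sing X$ by treating smooth and singular points separately, using Euler's identity as the bridge between the Hessian and the tangent geometry.

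First I would recall Euler's identity $\sum_i x_i \partial_i f = d f$ and differentiate it with respect to $x_j$ to obtain
\begin{equation*}
H(f) \cdot x = (d-1)\, \nabla f
\end{equation*}
(column-vector convention). This immediately handles the singular locus: at $P \in \Sing X$ one has $\nabla f(P) = 0$, hence $H(f)(P) \cdot P = 0$. Since $P$, viewed as a nonzero vector in $\CC^{n+1}$, lies in the kernel of $H(f)(P)$, we get $h(f)(P) = \det H(f)(P) = 0$. Thus $\Sing X \subseteq X \cap V(h(f))$.

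Next I would analyse a smooth point $P \in X$ by choosing homogeneous coordinates so that $P = [0{:}\dotsc{:}0{:}1]$ and $\TT_P X = \{x_0 = 0\}$; this is possible because $\nabla f(P) \ne 0$. Then $\nabla f(P) = (\partial_0 f(P),\,0,\dotsc,0)$ with $\partial_0 f(P) \ne 0$. By the identity above, the last column and last row of $H(f)(P)$ are both equal to $(d-1)\nabla f(P)$, so their only nonzero entry is $(d-1)\partial_0 f(P)$ in positions $(0,n)$ and $(n,0)$, and in particular $H_{nn}(P)=0$. Two successive cofactor expansions (along the last column, then along the last row of the resulting minor) give
\begin{equation*}
h(f)(P) \;=\; -(d-1)^{2}\bigl(\partial_0 f(P)\bigr)^{2}\det \bigl(\partial_i \partial_j f(P)\bigr)_{1 \le i,j \le n-1}.
\end{equation*}
It remains to identify the middle $(n-1)\times(n-1)$ block with a matrix representing $\II_P$: working in the affine chart $x_n = 1$, the implicit function theorem yields a local equation $x_0 = g(x_1,\dotsc,x_{n-1})$ with $g(0) = 0$ and $dg(0) = 0$; differentiating $f(g(x'),x',1)\equiv 0$ twice at the origin gives $\partial_i \partial_j g(0) = -\partial_i \partial_j f(P) / \partial_0 f(P)$. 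Since the Hessian of $g$ at the origin represents $\II_P$, the rank of the middle block coincides with that of $\II_P$, so its determinant vanishes exactly when $\II_P$ has rank at most $n-2$, i.e.\ when $P$ is $h$-parabolic for some $h \ge 1$.

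Combining the two analyses yields the set-theoretic equality $X \cap V(h(f)) = \{\text{parabolic points}\} \cup \Sing X$. The second assertion is then immediate: $X \cap V(h(f))$ is closed, so the closure in $X$ of the parabolic locus is contained in it, and by the first assertion this closure can only pick up singular points beyond the parabolic locus itself. I expect the main obstacle to be the bookkeeping in the determinant computation, together with the precise identification of the middle block of $H(f)(P)$ with a matrix representing $\II_P$: once this local chart calculation is set up carefully, the rest reduces to linear algebra applied to Euler's identity.
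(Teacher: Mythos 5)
Your argument for the main (first) assertion is correct and complete; the paper itself offers no proof here, deferring entirely to a citation of \cite[\S 5.6.4]{Bel}, so there is nothing in the text to compare against, but your computation is the standard one that such references give: differentiating Euler's identity to get $H(f)\cdot x=(d-1)\nabla f$, which kills the Hessian determinant at singular points, and then normalizing coordinates at a smooth point so that the bordered structure of $H(f)(P)$ reduces $h(f)(P)$ to $-(d-1)^2(\partial_0 f(P))^2$ times the determinant of the tangential block, which your implicit-function-theorem computation correctly identifies (up to the nonzero factor $-1/\partial_0 f(P)$) with a matrix representing $\II_P$. The sign and the cofactor bookkeeping check out, and the rank conventions match Definition \ref{defHparabPoint}: degeneracy of the $(n-1)\times(n-1)$ block is exactly $h$-parabolicity for some $h\ge 1$. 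The one place where you are slightly short of the statement as written is the second assertion: from closedness of $X\cap V(h(f))$ you obtain only the inclusion $\overline{\{\text{parabolic points}\}}\subseteq \{\text{parabolic points}\}\cup\Sing X$, whereas the proposition asserts an equality, whose other inclusion ($\Sing X$ contained in the closure of the parabolic locus) does not follow formally from the first assertion and in fact requires an additional argument (or should be read as the weaker containment); since the paper states this with a bare ``therefore'' and no proof, you are in no worse shape than the authors, but it is worth flagging that your reasoning establishes only one direction.
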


\begin{corollary}
If
all the (smooth) points of $X$ are  parabolic, then 
\begin{equation*}
X\subset V(h(f)), 
\end{equation*}
and vice versa. 
\end{corollary}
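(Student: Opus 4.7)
The plan is to deduce this corollary almost immediately from the preceding proposition, which identifies $V(h(f)) \cap X$ with the union of the parabolic locus and the singular locus of $X$.

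First I would argue the forward direction. Assume every smooth point of $X$ is parabolic. Then the parabolic locus of $X$ coincides with the smooth locus $X^{\smooth}$, which is an open dense subset of the irreducible hypersurface $X$. By the proposition, every parabolic point lies in $V(h(f))$, so $X^{\smooth} \subseteq V(h(f))$. Taking closures and using that $V(h(f))$ is closed (in particular, that $h(f)$ is not identically zero need not be assumed — if $h(f)\equiv 0$ the containment is trivial), we obtain $X = \overline{X^{\smooth}} \subseteq V(h(f))$.

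For the converse, assume $X \subseteq V(h(f))$. Then every point of $X$ lies in the intersection $V(h(f)) \cap X$, which by the proposition equals the union of the parabolic locus with $\Sing(X)$. A smooth point $P \in X^{\smooth}$ therefore cannot lie in $\Sing(X)$, so it must be parabolic, as required.

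The only step that deserves any care is making sure the equivalence between ``$P$ is parabolic'' and ``$P \in V(h(f)) \cap X^{\smooth}$'' is the one stated in the previous proposition; once this is in place, the corollary is essentially a restatement together with the density of $X^{\smooth}$ in $X$. There is no genuine obstacle.
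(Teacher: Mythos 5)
Your argument is correct and is exactly the deduction the paper intends: the corollary is stated as an immediate consequence of the preceding proposition identifying $V(h(f))\cap X$ with the union of the parabolic locus and the singular locus, and your two directions (density of the smooth locus for the forward implication, the set-theoretic identity for the converse) fill in precisely that reasoning. No difference in approach and no gap.
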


Indeed, one can prove  even more: 
\begin{theorem}
If  all the (smooth) points of an irreducible hypersurface $X\subset \PP^n$ defined by a form $f$ are  $h$-parabolic, then
\begin{equation*}
h(f)=f^h g,
\end{equation*}
that is $X$ is a \emph{$h$-tuple component of its Hessian}.
\end{theorem}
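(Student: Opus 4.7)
My plan is to show that at every smooth point of $X$ the Hessian matrix $H(f)$ has rank $n-h+1$, and then to extract $f^h \mid h(f)$ from a Smith normal form argument over the DVR obtained by localising at $(f)$.

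First I would set up a local model at a general smooth point $P \in X$. After a projective change of coordinates I can assume $P = (1:0:\cdots:0)$ and $\TT_P X = V(x_n)$. Collecting $f$ by powers of $x_0$, the vanishing $f(P)=0$ together with the position of the tangent hyperplane force
\begin{equation*}
f = c\, x_0^{d-1} x_n + x_0^{d-2} Q(x_1, \ldots, x_n) + O(x_0^{d-3}),
\end{equation*}
with $c \ne 0$ and $Q$ quadratic. Dehomogenising and writing $X$ locally as a graph $x_n = \phi(x_1, \ldots, x_{n-1})$ via the implicit function theorem gives $\phi_{ij}(0) = -Q_{ij}/c$, so the rank of the submatrix $A := (Q_{ij})_{1 \le i, j \le n-1}$ equals $\rank \II_P = n-h-1$. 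Direct differentiation of the expansion above produces the block form
\begin{equation*}
H(f)(P) = \begin{pmatrix} 0 & 0 & c(d-1) \\ 0 & A & b \\ c(d-1) & b^{T} & Q_{nn} \end{pmatrix}.
\end{equation*}
Using the two $c(d-1)$ entries as pivots for simultaneous elementary row and column operations clears the last row and column apart from those pivots, yielding $\rank H(f)(P) = 2 + \rank A = n-h+1$; hence the nullity of $H(f)$ is exactly $h$ at every smooth point of $X$.

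To globalise, since the smooth locus is dense in $X = V(f)$, the reduction $H(f) \bmod f$ has rank at most $n-h+1$ at the generic point of $X$. Localising $R = \CC[x_0, \ldots, x_n]$ at the height-one prime $(f)$ produces a DVR $R_{(f)}$ with uniformiser $f$, and in its Smith normal form $H(f)$ has diagonal entries $f^{a_0}, \ldots, f^{a_n}$, so $h(f) = (\text{unit}) \cdot f^{a_0+\cdots+a_n}$. The count of $a_i$ equal to $0$ equals the rank of $H(f) \bmod f$, so at least $h$ of the $a_i$ are $\ge 1$; this gives $f^h \mid h(f)$ in $R_{(f)}$, and hence in the UFD $R$. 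The main obstacle I expect is the precise identification $\rank A = \rank \II_P$, since $\II_P$ is defined intrinsically on $\PP(T_P X)$ while $A$ is recorded in a particular affine chart; the graph parametrisation is the natural bridge, but the scaling constants relating $Q_{ij}$ to the Hessian of $\phi$ and ultimately to $\II_P$ must be tracked carefully.
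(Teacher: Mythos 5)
Your argument is correct, and it is worth noting that the paper itself does not prove this theorem: it simply refers the reader to B.~Segre's original article and to a modern account elsewhere, so there is no internal proof to compare against. Your route --- showing that at every smooth point $P$ the corank of the full $(n+1)\times(n+1)$ Hessian matrix equals the corank $h$ of the quadric $\II_P$ via the normalisation $P=(1:0:\dotsb:0)$, $\TT_PX=V(x_n)$, and then converting the pointwise corank bound into divisibility by localising at the prime $(f)$ and reading off the $f$-adic valuation of the determinant from the Smith normal form over that DVR --- is essentially the standard modern proof of Segre's theorem, and all the steps check out. The block computation is right: with $f=c\,x_0^{d-1}x_n+x_0^{d-2}Q+O(x_0^{d-3})$ and $c\neq 0$, $d\geq 3$, the two pivots $c(d-1)$ clear the outer row and column without touching $A=(Q_{ij})_{1\le i,j\le n-1}$, so $\rank H(f)(P)=2+\rank A$; and the identification $\rank A=\rank\II_P$ that you flag as the delicate point is actually harmless, because the graph parametrisation gives the Hessian of $\phi$ at the origin as $-A/c$, and any nonzero scalar multiple of a symmetric matrix has the same rank, so no constants need to be tracked. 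Since projective changes of coordinates conjugate $H(f)$ by an invertible matrix, the corank statement holds at every smooth point, the $(n-h+2)$-minors vanish on the dense smooth locus and hence in the domain $R/(f)$, and the count of unit diagonal entries in the Smith form forces at least $h$ positive exponents. Two trivial points you should still record: the case $h(f)\equiv 0$ must be dispatched separately (the Smith form with nonzero diagonal entries presupposes $\det H(f)\neq 0$, though of course $f^h$ divides $0$), and you only need the inequality $\rank H(f)(P)\le n-h+1$, so the exact computation of the rank, while true, is more than the divisibility requires.
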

See \cite{Seb} and \cite[Corollary 4.4]{Z} for a modern account.

If, in particular, $h(f)\equiv 0$, then $X$ is
a $k$-tuple component   of its Hessian, for every $k\in \NN$, and $X$ is said to be, classically, \emph{with indeterminate Hessian}.


\section{The Gauss map}
Let us consider the Gauss map
\begin{align}
\gamma_1\colon X &{\dashrightarrow}\check{\PP^n}\label{eqGaussMap}\\
P &\mapsto  \TT_P X,\nonumber
\end{align}
where, as usual,  $\check{\PP^n}=\PP(V^*)$ is the dual of $\PP^n=\PP(V)$ and it is defined on the smooth locus of $X$, which is denoted by 
$X_{\smooth}$.
It is clear that in our case the (closure of the) image of $\gamma_1$ is the
(projective) \emph{dual variety of $X$}, which is denoted by $\check{X}\subset \check{\PP^n}$.

\begin{remark}\label{remGaussHParab}
 The differential of $\gamma_1$ at $P\in X_{\smooth}$, denoted by $d_P\gamma_1$, can be thought 
as the second fundamental form $\II_P$, see for example \cite[(1.62) p. 379]{G-H}.

Moreover, we recall that the (closure of the) fibres of the Gauss map are linear spaces, see
\cite[(2.10), p. 388]{G-H}. More precisely, if the rank of $d_P\gamma_1$ (or, which is the same, the
rank of $\II_P$) is $n-h-1(<n-1)$, or equivalently, $P$ is an $h$-parabolic point, then
\begin{equation*}
\PP^h_P:=\overline{\gamma_1^{-1}(\gamma_1(P))}\cong \PP^h.
\end{equation*}
Let us denote by $\sigma_P\in G(h+1,V)$ the point that corresponds to the $h$-dimensional subspace  $\PP^h_P\subset \PP^n$;
then the $\sigma_P$'s generate a subvariety
\begin{equation}\label{eq:sigma}
\Sigma_X:=\overline{\bigcup_{P\in X_{\smooth}} \sigma_P}\subseteq G(h+1,V).
\end{equation}

\end{remark}
After this remark, it is obvious that one can characterize the varieties with $h$-parabolic points by the following:

\begin{proposition}\label{prop:gs}
 If  all the smooth points of our irreducible hypersurface $X\subset\PP^n=\PP(V)$ defined by the form $f$ are $h$-parabolic,
the subvariety $\Sigma_X\subseteq G(h+1,V)$  of the fibres $\sigma_P$ of the Gauss map is such that
 \begin{enumerate}
 \item\label{gs:1} $\dim\Sigma_X=n-h-1$;
\item\label{gs:2} $X$ is covered by the elements of $\Sigma_X$;
\item\label{gs:3} if $P,Q_1,Q_2\in \PP^h_P$ are general points, then  $\TT_{Q_1} X= \TT_{Q_2} X= \TT_P X(=\gamma_1(P))$;
\item\label{gs:4} if $P\in X$ is a general point, then $\PP^h_P$ is the vertex of the asymptotic cone $Q(\II_P)$;
\item\label{gs:5} $X\subset V(h(f))$; more precisely $h(f)=f^h g$, i.e.  $X$ is a $h$-tuple component of its Hessian. 
\end{enumerate}
 Vice versa, if a hypersurface satisfies the properties \eqref{gs:1}, \eqref{gs:2} and either \eqref{gs:3} or \eqref{gs:4}, 
i.e. $X$ is covered by a family of 
$h$-planes of dimension $n-h-1$ such that in the smooth points of the general $h$-plane the tangent space to $X$ is constant, 
all its smooth points are $h$-parabolic. If $X$ satisfies \eqref{gs:5} and $h$ is maximal with this property, 
then there exists an $h'\le h$ such that all the smooth points of $X$ are $h'$-parabolic. 
\end{proposition}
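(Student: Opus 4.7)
The plan is to derive items (1)--(5) of the forward direction from Remark \ref{remGaussHParab} and the preceding theorem, and then to run the converse for each of conditions (3) and (4) separately. For the forward direction, (3) and (4) are repackagings of Remark \ref{remGaussHParab}: it identifies $\PP^h_P$ with the Gauss fibre $\overline{\gamma_1^{-1}(\gamma_1(P))}$, so $\gamma_1$ is constant on $\PP^h_P$ (giving (3)); and, by the preceding discussion of the asymptotic cone whose vertex is the span of $P$ with the vertex of $\II_P$, this same $\PP^h_P$ is the vertex of $Q(\II_P)$ (giving (4)). Item (2) is immediate since $P\in \PP^h_P$, and item (1) is a dimension count on the incidence $I=\{(P,\sigma)\in X\times \Sigma_X : P\in \sigma\}$: the second projection has $h$-dimensional fibres, while the first is dominant by (2) with generically finite fibres (distinct Gauss fibres are disjoint on a dense open subset of $X_{\smooth}$), so $\dim \Sigma_X = \dim I - h = \dim X - h = n-1-h$. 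Item (5) is a restatement of the theorem stated just before this proposition.

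For the converse under (1), (2), and (4), I would compute $\rank(\II_P)$ at a general smooth $P$ directly: by (4) the vertex of the asymptotic cone $Q(\II_P)$ has dimension $h$; since this vertex is the span of $P$ with the vertex of $\II_P \subset \PP(T_PX) \cong \PP^{n-2}$, the latter vertex has dimension $h-1$, hence $\rank(\II_P) = (n-2)-(h-1) = n-1-h$ and $P$ is $h$-parabolic. Under (1), (2), and (3) instead, (3) gives $T_P\Lambda \subset \ker(d_P\gamma_1)$ for $\Lambda = \PP^h_P \in \Sigma_X$, hence $\rank(\II_P) \le n-1-h$; for equality I would introduce the rational map $\Sigma_X \dashrightarrow \check X$ sending $\Lambda$ to the common value of $\TT_Q X$ for $Q \in \Lambda_{\smooth}$, show it is dominant using (2) (every general tangent plane $\TT_P X$ is hit by some $\Lambda \ni P$), and show its general fibres are zero-dimensional, which will yield $\dim \check X = n-1-h = \rank(\II_P)$. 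The main obstacle is exactly this last step in case (3): a priori $\Sigma_X$ could consist of $h$-planes strictly contained in larger common Gauss fibres, and one must exclude that this happens in a positive-dimensional way by combining (1) with a careful analysis of the fibres of the incidence $I \to X$.

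For the last statement, suppose $h(f) = f^h g$ with $h$ maximal (so $f \nmid g$). By upper semicontinuity of the corank of the Hessian matrix of $f$ restricted to the tangent spaces along $X_{\smooth}$, there exist an integer $h'$ and a dense open $U \subset X_{\smooth}$ on which every point is $h'$-parabolic. Invoking the theorem stated just before this proposition (whose conclusion $f^{h'} \mid h(f)$ depends only on the rank of $\II_P$ at a general smooth point) then gives $f^{h'} \mid h(f)$, so the maximality of $h$ forces $h' \le h$, as required.
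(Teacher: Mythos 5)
Your forward direction and the converse under \eqref{gs:4} are sound and considerably more detailed than the paper's own proof, which declares everything except \eqref{gs:4} to be clear and settles \eqref{gs:4} by citing the proofs of (2.6), (2.8) and (2.10) in Griffiths--Harris: the vertex of $Q(\II_P)$ is cut out by the forms $\omega_{i,n}$, and letting these move defines the foliation along which $\TT_PX$ is constant, whose leaves are the (linear) Gauss fibres. Your incidence-variety count for \eqref{gs:1}, the identification of \eqref{gs:5} with B.~Segre's theorem, and the semicontinuity argument for the final claim (which, as you implicitly note, really only yields that the \emph{general} smooth point is $h'$-parabolic) all match the intended content.

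The genuine problem is the one you flag yourself in the converse under \eqref{gs:3}, and it is worse than a missing verification: for an \emph{arbitrary} family of $h$-planes satisfying \eqref{gs:1}, \eqref{gs:2}, \eqref{gs:3}, the step cannot be carried out, because the statement is then false. Indeed, suppose every smooth point of $X$ is $(h+1)$-parabolic, so the Gauss fibres are $(h+1)$-planes forming a family of dimension $n-h-2$. Inside each Gauss fibre $\PP^{h+1}$ choose (algebraically in the fibre) a $\PP^{h-1}$ and take the pencil of hyperplanes of $\PP^{h+1}$ through it: this produces a family of $h$-planes of dimension $(n-h-2)+1=n-h-1$ covering $X$, with exactly one member through a general point and with $\TT_PX$ constant along each member, yet no smooth point of $X$ is $h$-parabolic. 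In this example your map $\Sigma_X\dashrightarrow\check{X}$ has one-dimensional fibres, so no analysis of the incidence correspondence $I\to X$ can force generic finiteness. The converse under \eqref{gs:3} is only correct if $\PP^h_P$ is read as the \emph{maximal} linear space through $P$ along which the tangent space is constant --- equivalently, the full closure of the Gauss fibre --- which reduces \eqref{gs:3} to \eqref{gs:4} and is evidently what the authors intend, since their proof identifies the two notions via Griffiths--Harris. You should either adopt that reading explicitly, or weaken the conclusion in case \eqref{gs:3} to: the smooth points of $X$ are $h'$-parabolic for some $h'\ge h$ (which your inequality $\rank\II_P\le n-1-h$ already gives).
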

\begin{proof}
Clearly, we have only to prove case \eqref{gs:4}; but this follows for example from the proofs of \cite[(2.6) and (2.10)]{G-H}: 
the vertex of $Q(\II_P)$ is defined by the forms in \cite[(2.8)]{G-H}, and letting these to move, 
they define the foliation on $X$ along which the tangent space remains constant and equal to  $\TT_P X$, 
i.e. the fibre of the Gauss map, and this is a linear space by \cite[(2.10)]{G-H}. 
\end{proof}

\section{Focal locus}\label{sec:foc}
Let us consider now a desingularization $S$ of our $(n-h-1)$-dimensional variety $\Sigma_X\subset G(h+1,V)$ defined in \eqref{eq:sigma}:
\begin{equation*}
s\colon S\to \Sigma_X\subset G(h+1,V);
\end{equation*}
associated to it we have the incidence correspondence
\begin{equation*}
I\subset S\times X\subset S\times \PP^n;
\end{equation*}
the projections induce maps
\begin{equation*}
\xymatrix{&\ar[dl]_p\ar[dr]^qI&\\
S&&X}
\end{equation*}
and $p\colon I\to S$ gives a flat family of $h$-planes of $\PP^n$;
but these $h$-planes are contained in $X$, which is---in general---a singular variety.
We can consider the following rational map
\begin{align*}
\phi\colon X&\dashrightarrow S\\
P&\mapsto s(\sigma_P)
\end{align*}
which associates to the (smooth) point $P\in X_{\smooth}$ its fibre of the Gauss map.
Let us consider a desingularization $\xi\colon \Xi \to X$ of $X$ which resolves also the indeterminacy of $\phi$:
\begin{equation*}
\xymatrix{&\ar[dl]_{\varphi}\ar[dr]^\xi\Xi&\\
S\ar@{<--}[rr]^\phi &&X}
\end{equation*}
and the corresponding fibre product $\Lambda:=I\times_X \Xi$:
\begin{equation*}
\xymatrix{S&I\ar[l]^p \ar[d]_q&\Lambda \ar[l]^\epsilon\ar[d]^\eta\ar@/_/[ll]_\pi\\
&X& \Xi\ar[l]_\xi}.
\end{equation*}
Since $\epsilon $ and $p$ are flat morphisms, the composition morphism $\pi\colon \Lambda\to S$ is flat too.
We recall the following:
\begin{definition}
A \emph{congruence of $k$-planes in $\PP^n$} is a flat family $(\Lambda, B, p )$ of $k$-planes of
$\PP^n$ obtained by a desingularization of a subvariety $B'$ of dimension $n-k$ of
the Grassmannian $G (k+1, n+1 )$ of $k$-planes of $\PP^n$. Notice that  $p$ is the restriction of the projection
$p_1\colon B \times \PP^n\to B$ to $\Lambda$. By dimensional reasons, there passes a finite number of elements this family 
though the general point of $\PP^n$, and this number is called the \emph{order} of the congruence.

Therefore, in analogy with the case of $\PP^n$, we can call the above flat
family $(\Lambda,S,\pi)$ as a \emph{congruence of $h$-planes in $X$}. In this case $\dim X=n-1$ and $\dim S=n-h-1$, therefore
there passes a finite number of elements the family though the general point of $X$.
We will observe in Remark \ref{rmk:111} that this number is one.
\end{definition}

Therefore, we can define the above triple $(\Lambda,B,\pi)$ as a \emph{congruence of $h$-planes in $X$}.

We note that $\pi$ gives a structure of a \emph{$\PP^h$-bundle} on $S$ to $\Lambda$.

Now we observe that---from the natural inclusion of the product $ \Lambda\to S\times \Xi$---we can construct
the \emph{focal diagram} associated to this flat family of $h$-planes:
\begin{equation*}
\xymatrix{
&&0\ar[d]&&\\
&&\Tf_{S\times \Xi/ \Xi}\mid_\Lambda \ar[d]\ar[rd]^{\lambda}&&\\
0\ar[r]&\Tf_\Lambda\ar[r]\ar[dr]_{d\eta}&\Tf_{S\times \Xi}\mid_\Lambda \ar[r]\ar[d]&\Nf_{\Lambda / S\times \Xi}\ar[r]& 0\\
&&\eta^*\Tf_{\Xi} \ar[d]&&\\
&&0&&
}
\end{equation*}
where $\Tf_{S\times \Xi/ \Xi}=\Ho(\Omega^1_{(S\times \Xi)/\Xi},\OO_{S\times \Xi})$ is the relative tangent sheaf of $S\times \Xi$
with respect to $\Xi$ (and observe that, if $p_1\colon S\times \Xi\to S $ is the projection, $p^*_1 \Tf_I\cong\Tf_{S\times \Xi/ \Xi}$) and
$\lambda$ is given by composition and it is called \emph{global characteristic map} of our family.
If we restrict $\lambda$ to a fibre $\Lambda_s:=\pi^{-1}(s)$, $s\in S$ of the family $\Lambda$, we obtain the \emph{characteristic map of
the family relative to $s$}:
\begin{equation*}
\lambda(s)\colon T_{s}S\otimes \OO_{\Lambda(s)}\to \Nf_{\Lambda(s) / \Xi}
\end{equation*}
where $\Lambda(s)\subset \Xi$ is the image of $\Lambda_s$ in $\Xi$. Passing to global sections, we get a linear map
\begin{equation*}
\ell(s)\colon T_{s}S\to H^0(\Lambda(s),\Nf_{\Lambda(s) / \Xi})
\end{equation*}
which is the differential of the functorial map $S\to \Hilb^{q(x)}_\Xi$.

Now, the condition
\begin{equation*}
\rank \lambda < n-h-1=\rank \Tf_{S\times \Xi/ \Xi}\mid_\Lambda = \rank \Nf_{\Lambda / S\times \Xi}
\end{equation*}
defines a closed subscheme of $\Lambda$, and from the focal diagram clearly we have
\begin{equation*}
\ker \lambda \cong \ker d\eta
\end{equation*}
and this closed subscheme gives the ramification divisor of $\eta$.
\begin{definition}
The image  $F\subset \Xi$  of $\eta$, 
i.e. the branch locus of $\eta$ is called the
\emph{focal locus} of the family $\Lambda$ of $h$-planes contained in $X$. Its points are called \emph{focal points}.
\end{definition}
\begin{remark}\label{rmk:111}
Now, it is immediate to observe that the map $\eta$ is birational, since through the general point of $X$ (and therefore of $\Xi$) there
passes only one $h$-space of the family: it is said that the family has \emph{order one}.

Therefore, by Zariski Main  Theorem, we deduce that the focal
locus coincides with the \emph{fundamental locus}, that is, the locus of the points of $\Xi$ for which the fibre under $\eta$ has positive
dimension.
\end{remark}
From this we deduce that
\begin{theorem}\label{thm:n-3} If $X\subset \PP^n$ is a hypersurface whose smooth points are $h$-parabolic and $F$ is the focal locus of the family
of $h$-planes contained in $X$, then
\begin{equation*}
\dim F \le n-3,
\end{equation*}
i.e. $F$ has codimension at least $2$ in $\Xi$.
\end{theorem}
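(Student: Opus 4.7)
My plan is to deduce the bound $\dim F\le n-3$ directly from the birationality statement in Remark \ref{rmk:111} together with Zariski's Main Theorem, viewing $\eta\colon \Lambda\to \Xi$ as a birational morphism between smooth projective varieties of dimension $n-1$.

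First I would check that both $\Lambda$ and $\Xi$ are smooth and of pure dimension $n-1$. For $\Xi$ this is built into the construction as a resolution of $X$. For $\Lambda$, I would use the fact observed just before the focal diagram that $\pi\colon \Lambda\to S$ gives $\Lambda$ the structure of a $\PP^{h}$-bundle over the smooth base $S$ of dimension $n-h-1$; hence $\Lambda$ is smooth of dimension $h+(n-h-1)=n-1$, matching $\dim\Xi$. Next I would invoke Remark \ref{rmk:111}, which says that $\eta$ is birational (the family has order one) and that the focal locus $F$ coincides with the fundamental locus of $\eta$, i.e.\ the set of points of $\Xi$ whose fibre under $\eta$ has positive dimension.

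At this point the main tool is the classical consequence of Zariski's Main Theorem for birational morphisms: if $\eta\colon \Lambda\to \Xi$ is a birational projective morphism and $\Xi$ is normal (in particular, smooth), then the locus of points of $\Xi$ over which $\eta$ fails to be an isomorphism has codimension at least $2$ in $\Xi$. I would apply this directly: since $\Xi$ is smooth and $\eta$ is birational and projective, the fundamental locus in $\Xi$ has codimension $\ge 2$, so
\begin{equation*}
\dim F\le \dim \Xi - 2 = (n-1)-2 = n-3.
\end{equation*}

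The potentially delicate step is ensuring that we really are in a situation where Zariski's Main Theorem gives codimension $\ge 2$ rather than just non-dominance. This is why smoothness (hence normality) of $\Xi$ is essential: over a point $x\in \Xi$ of codimension one, a birational morphism from an irreducible variety to a normal variety cannot have a positive-dimensional fibre without contradicting the fact that $\eta_{*}\OO_{\Lambda}=\OO_{\Xi}$ in codimension one. Once smoothness of $\Xi$ is in place, the theorem applies verbatim, and no further analysis of the focal diagram is needed for this bound.
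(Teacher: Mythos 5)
Your argument is correct and is essentially the paper's own: the theorem is stated there as a direct deduction from Remark \ref{rmk:111} (birationality of $\eta$ and the identification of the focal locus with the fundamental locus), with the codimension~$\ge 2$ bound coming from Zariski's Main Theorem applied to a proper birational morphism onto the smooth (hence normal) variety $\Xi$. You have merely made explicit the smoothness and dimension checks for $\Lambda$ and $\Xi$ that the paper leaves implicit.
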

Therefore we have
\begin{corollary}
If $X\subset \PP^n$ is a hypersurface that is a component of its Hessian
then
\begin{equation*}
\dim F \le n-3.
\end{equation*}
In particular,   this is true for a hypersurface with indeterminate Hessian.
\end{corollary}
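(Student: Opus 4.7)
The plan is to reduce the corollary to Theorem \ref{thm:n-3} by producing an integer $h \ge 1$ such that the general smooth point of $X$ is $h$-parabolic, and then to apply that theorem directly.

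First, I would observe that $X$ being a component of its Hessian means $f \mid h(f)$, hence $X \subset V(h(f))$. By the Corollary of Section 3 (the one stating that every smooth point of $X$ is parabolic if and only if $X \subset V(h(f))$), every smooth point $P$ of $X$ is then parabolic, i.e.\ the second fundamental form $\II_P$ has non-trivial kernel. Next, by upper semi-continuity of rank on $X_{\smooth}$, there exists a dense open subset on which $\rank \II_P$ is equal to a fixed value $n-h-1$ with $1 \le h \le n-1$; on this open subset every smooth point is $h$-parabolic for the same $h$. Thus the hypothesis of Theorem \ref{thm:n-3} is satisfied, and the conclusion $\dim F \le n-3$ follows at once.

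For the final assertion on hypersurfaces with indeterminate Hessian, I would simply note that $h(f)\equiv 0$ makes the divisibility $f \mid h(f)$ trivial, so $X$ falls into the previous case and the same argument gives the same bound. Equivalently, indeterminacy of the Hessian is a particular instance of $X \subset V(h(f))$, taking $V(h(f))$ to be all of $\PP^n$.

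The only step of substance is the extraction of a single generic value of $h$ from the a priori point-dependent parabolicity, and this is handled uniformly by upper semi-continuity of the rank of $\II_P$; once that is in place, the statement is an immediate consequence of Theorem \ref{thm:n-3}. There is no further obstacle, since the birationality of $\eta$ used in the theorem (through Remark \ref{rmk:111}) relies only on the uniqueness of the fibre of the Gauss map through a general smooth point, which is automatic in the present setting.
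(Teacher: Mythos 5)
Your proposal is correct and follows essentially the same route as the paper, which leaves this corollary as an immediate consequence of Theorem \ref{thm:n-3}: being a component of the Hessian gives $X\subset V(h(f))$, all smooth points are parabolic, and a single generic value of $h$ is extracted (the paper does this via the last assertion of Proposition \ref{prop:gs}, you via constancy of $\rank \II_P$ on a dense open set, which is the same content). The only nitpick is terminological: the rank of $\II_P$ is lower semicontinuous (equivalently the corank is upper semicontinuous), which is exactly what yields the dense open subset of constant rank you need.
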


It can indeed happen that $F=\emptyset$:
\begin{proposition}\label{prop:con}
Under the assumptions  from the previous theorem,
$F=\emptyset$ if and only if the $h$-planes of $X$ give a structure of $\PP^h$-bundle on $\Xi$.
\end{proposition}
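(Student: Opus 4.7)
\emph{Plan.} Because $\pi\colon\Lambda\to S$ is a $\PP^h$-bundle and $\eta\colon\Lambda\to\Xi$ is proper and birational (Remark \ref{rmk:111}), the proposition amounts to the assertion that $\eta$ is an isomorphism if and only if $F=\emptyset$. The focal (equivalently fundamental) locus $F$ is, by definition, precisely the image in $\Xi$ of the positive-dimensional fibers of $\eta$, so the equivalence of ``$F=\emptyset$'' and ``$\eta$ is an isomorphism'' will immediately yield the transport of the $\PP^h$-bundle structure from $\Lambda$ to $\Xi$.

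First I would handle the forward direction. Assume $F=\emptyset$. Then every fiber of $\eta$ is $0$-dimensional; since $\eta$ is proper, it is finite, and since $\Xi$ is smooth (hence normal) and $\eta$ is birational, Zariski's Main Theorem implies that $\eta$ is an isomorphism. The composition $\pi\circ\eta^{-1}\colon\Xi\to S$ then endows $\Xi$ with a $\PP^h$-bundle structure over $S$ whose fibers are the (proper transforms in $\Xi$ of the) $h$-planes of $\Sigma_X$, as desired.

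For the converse, suppose a morphism $\psi\colon\Xi\to S$ realizes $\Xi$ as a $\PP^h$-bundle over $S$ whose fibers are the $h$-planes of $X$ (through $\xi$). Then the pair $(\psi,\xi)\colon\Xi\to S\times X$ factors through $I\subset S\times X$: for a point $p\in\Xi$ with $s=\psi(p)$, the fiber $\psi^{-1}(s)$ maps via $\xi$ into the $h$-plane parametrized by $s$, so in particular $(\psi(p),\xi(p))\in I$. By the universal property of the fiber product $\Lambda=I\times_X\Xi$, the resulting map $\Xi\to I$ together with $\mathrm{id}_\Xi$ produces a morphism $\sigma\colon\Xi\to\Lambda$ satisfying $\eta\circ\sigma=\mathrm{id}_\Xi$. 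Since $\eta$ is separated, $\sigma$ is a closed immersion; its image is irreducible of the same dimension $n-1$ as the irreducible $\Lambda$, hence $\sigma(\Xi)=\Lambda$. Thus $\sigma$ and $\eta$ are mutually inverse, $\eta$ is an isomorphism, and $F=\emptyset$.

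The main obstacle I anticipate is the careful execution of the converse: one must verify both that $(\psi,\xi)$ factors through $I$ scheme-theoretically (not merely set-theoretically) and that the hypothesized $\PP^h$-bundle structure on $\Xi$ actually has the proper transforms of the $h$-planes of $\Sigma_X$ as its fibers, in order to legitimately invoke the universal property of $\Lambda=I\times_X\Xi$ and obtain the section $\sigma$ of $\eta$.
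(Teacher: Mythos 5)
Your proof is correct and follows essentially the same route as the paper, whose entire argument is the one-line observation that $F=\emptyset$ if and only if $\eta$ is an isomorphism, combined with the fact that $\pi\colon\Lambda\to S$ is already a $\PP^h$-bundle. You have simply supplied the details the paper leaves implicit (Zariski's Main Theorem for the forward direction, and the construction of a section of $\eta$ from the bundle structure for the converse), which is a legitimate and careful expansion of the same idea.
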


\begin{proof}
It follows from the fact that  $F=\emptyset$ if and only if the map $\eta$ is an isomorphism, and that $\Lambda$ is a $\PP^h$-bundle over
$S$.

\end{proof}

\begin{remark}
Since $\eta\colon\Lambda\to \Xi $ is birational and $\Lambda$ and $\Xi$ are smooth, we can think of $\eta$ as the blow-up of $\Xi$ along 
the focal locus $F$.  
\end{remark}

We recall that (Proposition \ref{prop:gs}, \eqref{gs:3}) that the tangent hyperplane is constant along the points of a 
fixed $h$-plane of $X$.

\begin{example}\label{ex:1}
The simplest case in which Proposition \ref{prop:con} applies is clearly the case in which $X$ is a cone: in this case a desingularization of
$X$ is obviously a $\PP^h$-bundle. 
\end{example}

\begin{example}\label{ex:2}
Let $Y\subset\PP^{2h+1}$ be a \emph{non-degenerate} (i.e.  not contained in a hyperplane) \emph{non-defective}
(i.e. such that its secant variety has expected dimension) variety of dimension $h$;
then, its \emph{variety of tangents} $\TT Y:=\overline{\cup_{P\in Y_{\smooth}}\TT_P Y} \subset \PP^{2h+1}$ is a \emph{developable} hypersurface, whose
desingularization is clearly the projectivized tangent bundle of $Y$.  
\end{example}

 Asking  if there are more examples beyond Examples \ref{ex:1} and \ref{ex:2},
 we need to focus on the infinitesimal behaviour of our variety, which will be done in Section \ref{sec:mf}.

\section{Moving frames}\label{sec:mf}

To study the behaviour of $X$ in $P\in X_{\smooth}$,
following \cite{G-H} (and references [2], [6], [7] and [10] therein), and using notation as in
\cite{ddi} and \cite{di}
we consider the manifold $\mathcal F (X)$ of frames in $X$.
 An element of  $\mathcal F (X)$ is a \emph{Darboux frame} centred in $P$.
This means an $(n+1)$-tuple
\begin{equation*}
\left\{A_0;A_1,\dotsc,A_{n-1};A_n\right\}
\end{equation*}
which is a basis of $V\cong \CC^{n+1}$ such that, if $\pi:V\setminus\{0\}\rightarrow \PP^N$ is the canonical projection,
\begin{align*}
&\pi(A_0)=P,&
&\textup{ and }&
&\pi(A_0), \pi(A_1),\dotsc,\pi(A_{n-1})\text{ span } \TT_P(X).
\end{align*}
Here and in this section, following \cite{G-H}, by abusing notation, we identify the embedded tangent space with its affine cone.

Let this frame move in  $\mathcal F (X)$;
then we have the following structure equations (in terms of the restrictions to $X$ of
the Maurer-Cartan $1$-forms $\omega_i$, $\omega_{i,j}$ on $\mathcal F(\PP^n)$)
for the exterior derivatives of this moving frame

\begin{equation}\label{derivate_frame_eq}
\begin{cases}
 \omega_n=0 &\\
  d A_0=\sum_{i=0}^{n-1} \omega_i A_i \\
  d A_i=\sum_{j=0}^n \omega_{i,j}A_j  &i=1,\dotsc,N\\
  d\omega_j= \sum_{h=0}^{n-1} \omega_h \wedge \omega_{h,j} & j=0,\dotsc,n-1\\
    d\omega_{i,j}= \sum_{h=0}^n \omega_{i,h} \wedge \omega_{h,j}  &i=1,\dotsc,n,\ j=0,\dotsc,n.
\end{cases}
\end{equation}
Following \cite{G-H}, we have that, in this notation, the second fundamental form of $X$ in $P$
 is given by the quadric
\begin{equation}\label{eq:secondo}
V^{(2)}=\sum_{i,j=1}^{n-1}\omega_i\omega_{i,n} =\sum_{i,j=1}^{n-1}q_{i,j}\omega_{i} \omega_{j} ,
\end{equation}
where $q_{i,j}(=q_{j,i})$ are defined by
\begin{equation}\label{eq:secondobis}
\omega_{i, n}=\sum_{j}q_{i,j} \omega_{j},
\end{equation}
which are obtained via the Cartan Lemma from
\begin{equation*}
0=d\omega_{n}=\sum_{i}\omega_{i}\wedge \omega_{i,n},
\end{equation*}
since $\omega_{n}=0$ on $\TT_P(X)$.

 The (projective) Gauss map can be expressed, using the above Darboux frame, as the rational map
\begin{align*}
    \gamma_1\colon &X \dashrightarrow \check{\PP^n}\\
    & P \mapsto A_0\wedge\dotsb \wedge A_{n-1},
\end{align*}
and therefore by \eqref{derivate_frame_eq},
\begin{align*}
 d\gamma_1&\equiv\sum_{
i=1}^{n-1}(-1)^{n-i} \omega _{i,n} A_0\wedge\dotsb\wedge \hat {A_i}\wedge\dotsb \wedge A_{n-1}\wedge A_{n}, & \mod \TT_P X;
\end{align*}
from which one can deduce that
$d\gamma_1$ at $P$ can be interpreted as the second fundamental form at $P$,
since, thanks to the canonical isomorphism $\bigwedge^{n-1}V\cong V^*$, we have
\begin{equation*}
\omega_i\cong (-1)^{n-i} A_0\wedge\dotsb\wedge \hat {A_i}\wedge\dotsb \wedge A_{n-1}\wedge A_{n},
\end{equation*}
see \eqref{eq:secondobis}.

Let now suppose as above that the Gauss map has fibres $\PP^h_P$ of dimension $h$; this happens---as we have seen---if
and only if $\rank (d\gamma_1)=n-h-1$ and if and only if
the space
\begin{equation*}
U^*:=\gen{\omega _{i,n}}_{i=1,\dotsc, n-1}\subset T^*_P(X)
\end{equation*}
has dimension $n-h-1$. Dually, this space defines a subspace  $U\subset T_P(X)$ of dimension $h$, defined by the equations
\begin{align*}
\omega _{i,n}&=0, & i=1,\dotsc, n-1.
\end{align*}
We choose a frame such that $\omega_{h+1},\dotsc,\omega_{n-1}$ form a basis for $U^*$,
that is
\begin{equation}\label{eq:genero}
\gen{\omega _{i,n}}_{
i=1,\dotsc, n-1}=\gen{\omega_{h+1},\dotsc,\omega_{n-1}}.
\end{equation}
Indeed, the quadric of the second fundamental form is a cone with vertex  $\PP^{h-1}=\PP(U)$, see \cite[(2.6)]{G-H} and has equation,
see \cite[(3.11)]{di}
\begin{equation*}
V^{(2)}=\sum_{k_1,k_2=h+1}^{n-1}q_{k_1, k_2}\omega_{k_1}\omega_{k_2}.
\end{equation*}
Of course, the cone over $\PP^{h-1}=\PP(U)$ with vertex $P$ in $\TT_PX$
is the (closure of the) fibre of the Gauss map $\PP^h_P\cong \PP^h$, see \cite[(2.10)]{G-H} and Proposition
\ref{prop:gs}, \eqref{gs:4}.

\subsection{Focal locus and moving frames}
We start by recalling the notation introduced in \cite[\S 2(a)]{G-H}.  Let $B$ be an $r$-dimensional variety,  
\begin{equation*}
f\colon B \to G(h+1,V)
\end{equation*}
be a morphism, $y\in B$ be a general point, and let $S_y\subset V$ (where as above $\PP^n=\PP(V)$)
be the $(h+1)$-dimensional vector space which
corresponds to $f(y)$. Then, the differential of $f$ in $y$ can be thought of
\begin{equation*}
    d_y f \colon T_y B \to  \Hom(S_y, N_y),
\end{equation*}
where $N_y:=\frac{V}{S_y}$. More explicitly, if $y_1,\dotsc, y_r$ are local coordinates of $B$ near $y$, and if $e_0(y), \dotsc e_h(y)$ is
a basis for the  $(h+1)$-dimensional vector spaces near  $S_y$, we have
\begin{equation*}
    d_y f\left(\frac{\partial }{\partial y_i}\right)(e_j(y))\equiv \frac{\partial (e_j(y))}{\partial y_i}\mod S_y
\end{equation*}
for $i=1,\dotsc,r$ and $j=0,\dotsc h$. Then fixing $w\in  T_y B$, one can define the \emph{infinitely near space to $S_y$ in
the direction of $w$} as
\begin{equation*}
\frac{S_y}{dw}:=\im d_y f(w)\subset N_y,
\end{equation*}
and---if we denote by $[v]$ the class of $v\in V$ in $N_y$---we consider the following subspaces of $V$:
\begin{align*}
S_y+\frac{S_y}{dw}&:=\left\{v\in V\mid v=s+t, s\in S_y, [t]\in \frac{S_y}{dw}\right\}\\
S_y\cap\frac{S_y}{dw}&:=\left\{s\in S_y\mid [s]\in  \frac{S_y}{dw}\right\}=\ker (d_y f(w)).
\end{align*}
Obviously, we have
\begin{align}\label{eq:ro}
\rank(d_yf(w))&=\rho& &\iff &\dim(S_y+\frac{S_y}{dw})&=h+1+\rho& &\iff &\dim(S_y\cap\frac{S_y}{dw})&=h+1-\rho.
\end{align}
We also denote the projective subspaces of $\PP^n$ associated to the vector spaces just defined by
\begin{align*}
\PP_y^h+\frac{\PP_y^h}{dw}&:=\PP(S_y+\frac{S_y}{dw})\\
\PP_y^h\cap\frac{\PP_y^h}{dw}&:=\PP(S_y\cap\frac{S_y}{dw}).
\end{align*}

We apply now this to the situation of Section \ref{sec:foc}  with $S=B$, $s=f$,  and $r=n-h-1$. Moreover, $\PP_y^h=\PP^h_P$ for
some $P\in X$ (we can suppose that $P$ is a smooth point), with
$y\in S$.
A count with coordinates shows, see \cite[(2.18)]{G-H}
\begin{equation}\label{eq:2}
\PP_y^h+\frac{\PP_y^h}{dw}=\PP^h_P+\frac{\PP^h_P}{dw} \subset\TT_PX=: \PP^{n-1}_y.
\end{equation}
The \emph{focal points} in a $\PP^h_P$,  as defined classically by C. Segre and A. Franchetta,  
are given by $\PP^h_P\cap\frac{\PP^h_P}{dw}$ and the points in
this set are contained in the singular locus of $X$
(\cite[(page 393)]{G-H}). 
\begin{remark}
Indeed, our definition of focal point is more restrictive than the classical one: for example, 
the points of the vertex of a cone (see Example \ref{ex:1}) or the points of the variety $Y$  from Example \ref{ex:2} are focal points 
in the classical sense, but these are not focal points for us. 
\end{remark}
This fact can also be seen in the following way:  
if we do not have the focal locus in the classical sense, we must have that
\begin{equation*}
\PP^h_P\cap\frac{\PP^h_P}{dw}= \emptyset 
\end{equation*}
which is equivalent the requirement that 
\begin{equation*}
\dim \PP^h_P+\frac{\PP^h_P}{dw}= 2h+1 
\end{equation*}
and therefore $h\le \frac{n}{2}-1$.

\bibliographystyle{amsalpha}
\bibliography{dim6}

\end{document}